\newtheorem*{conj*}{Conjecture}
\newtheorem{theorem}{Theorem}[section]
\theoremstyle{definition}
\newtheorem*{remark}{Remark}
\theoremstyle{plain}
\newtheorem{lemma}[theorem]{Lemma}
\newtheorem{corollary}[theorem]{Corollary}
\newcommand{\CC}{\mathcal{C}}
\newcommand{\leg}[2]{\left( \frac{#1}{#2} \right)}
\numberwithin{equation}{section}
\newtheoremstyle{example}
  {\topsep}   % ABOVESPACE
  {\topsep}   % BELOWSPACE
  {\normalfont}  % BODYFONT
  {0pt}       % INDENT (empty value is the same as 0pt)
  {\bfseries} % HEADFONT
  {.}         % HEADPUNCT
  {5pt plus 1pt minus 1pt} % HEADSPACE
  {}          % CUSTOM-HEAD-SPEC
\theoremstyle{example}
\newtheorem*{example}{Example}
\def\({\left(}
\def\){\right)}
\begin{document}
\title{Zeros in the character tables of symmetric groups with an $\ell$-core index}
\author{Eleanor McSpirit and  Ken Ono}

\dedicatory{In memory of master representation theorist John McKay}

\address{Department of Mathematics, University of Virginia, Charlottesville, VA 22904}
\email{egm3zq@virginia.edu}
\email{ken.ono691@virginia.edu}

\keywords{Primary: character tables, hook lengths, partitions, symmetric groups}
\subjclass[2020]{20C30, 11P82, 05A17}

\thanks{E.M. acknowledges the support of a UVA Dean's Doctoral Fellowship. K.O. thanks  the Thomas Jefferson Fund and the NSF
(DMS-2002265 and DMS-2055118) for their support.}

\begin{abstract} Let $\CC_n =\left [\chi_{\lambda}(\mu)\right]_{\lambda, \mu}$ be the character table  for  $S_n,$ where the indices $\lambda$ and $\mu$ run over the $p(n)$ many integer partitions of $n.$ In this note we study $Z_{\ell}(n),$ the number of zero entries $\chi_{\lambda}(\mu)$ in $\CC_n,$ where $\lambda$ is an $\ell$-core partition of $n.$
For every prime $\ell\geq 5,$ we prove an asymptotic formula of the form
 $$
Z_{\ell}(n)\sim \alpha_{\ell}\cdot \sigma_{\ell}(n+\delta_{\ell})p(n)\gg_{\ell} n^{\frac{\ell-5}{2}}e^{\pi\sqrt{2n/3}},
 $$
 where $\sigma_{\ell}(n)$ is a twisted Legendre symbol divisor function, $\delta_{\ell}:=(\ell^2-1)/24,$ and
$1/\alpha_{\ell}>0$ is a normalization of the Dirichlet $L$-value $L\left (\leg{\cdot}{\ell},\frac{\ell-1}{2}\right).$
For primes $\ell$ and $n>\ell^6/24,$ we show 
 that $\chi_{\lambda}(\mu)=0$ whenever $\lambda$ and $\mu$ are both $\ell$-cores.
 Furthermore, if $Z^*_{\ell}(n)$ is the number of zero entries indexed by two $\ell$-cores, then for $\ell\geq 5$ we obtain the asymptotic
$$
Z^*_{\ell}(n)\sim \alpha_{\ell}^2 \cdot \sigma_{\ell}( n+\delta_{\ell})^2 \gg_{\ell} n^{\ell-3}.
$$
\end{abstract}

\maketitle

\section{Introduction and statement of results}

Let $\CC_n =\left [\chi_{\lambda}(\mu)\right]_{\lambda,\mu}$ be the usual character table
(for example, see \cite{J-K, Sagan, Stanley})
 for the symmetric group $S_n,$ where the indices $\lambda$ and $\mu$ both vary over the $p(n)$ many integer partitions of $n$. Confirming conjectures of Miller \cite{Mp}, Peluse and Soundararajan 
 \cite{P, P-S} recently proved that if $\ell$ is prime,
then almost all of the $p(n)^2$ entries in $\CC_n,$ as $n\rightarrow +\infty,$ are multiples of  $\ell.$ We note that Miller conjectured that the same conclusion holds for arbitrary prime powers, a claim which remains open.

In recent papers \cite{Mp, Mz}, Miller raised the problem of determining the limiting behavior of $Z(n),$  
 the number of zero entries in $\CC_n.$
Despite the remarkable theorem of Peluse and Soundararajan, little is known. 
Moreover, due to the rapid growth of $p(n),$ it is computationally infeasible to compute many  values of $Z(n).$  Consequently, there are no conjectures that are supported with substantial numerics. For example, is there  a limiting proportion for the zeros in $\CC_n$? Such a proportion would be given by the limit
$$
\lim_{n\rightarrow +\infty} \frac{Z(n)}{p(n)^2}.
$$
Limited numerics suggest that such a limit might exist, and might be  $\approx 0.36$ (see Table~3 of \cite{Mp}). However, this is a dubious guess at best. What's more, the simpler problem of determining whether
$\liminf_{n\rightarrow +\infty}Z(n)/p(n)^2>0$
also seems  to be out of reach.  In view of these difficulties, McKay \cite{He} posed a less ambitious problem; he asked for lower bounds arising from $\ell$-cores that illustrate the rapid growth of $Z(n).$
Here we answer this question, and
for primes $\ell \geq 5,$ we obtain asymptotic formulas for
\begin{equation}
Z_{\ell}(n):=\#\left \{ (\lambda, \mu) \ : \ \chi_{\lambda}(\mu)=0 \ \ {\text {\rm with  $\lambda$ an $\ell$-core}}\right\}.
\end{equation}

To this end, suppose that
 $\lambda=(\lambda_1, \lambda_2,\dots, \lambda_s)$ is a partition of $n.$ As is typical in the representation theory of symmetric groups, we
make use of $\ell$-core partitions, which are 
defined using {\it Young diagrams} of partitions,  the left-justified arrays of cells where the row lengths are the parts.  
 The {\it hook} for the
cell in position $(k,j)$ is the set of cells below or to the right of that cell, including the cell itself, and so its {\it hook length} $h_{\lambda}(k,j):=(\lambda_k-k)+(\lambda'_j-j)+1.$ Here $\lambda'_j$ is the number of boxes in the $j$th column of the diagram. We say that $\lambda$ is an {\it $\ell$-core partition}
 if none of its hook lengths are multiples of $\ell.$ 
If $c_{\ell}(n)$ denotes the number of $\ell$-core partitions of $n$, then we have (for example, see \cite{G-K-S, K}) the generating function
$$
\sum_{n=0}^{\infty} c_{\ell}(n)q^n=\prod_{n=1}^{\infty} \frac{(1-q^{\ell n})^{\ell}}{(1-q^n)}.
$$

\begin{example}
The Young diagram of the partition $\lambda:=(5,4,1),$ where each cell is labelled with its hook length, is given in Figure~1.
\begin{center}
\begin{ytableau}
  7 & 5 & 4 & 3 & 1\cr
  5 & 3 & 2 & 1\cr
  1 \cr
\end{ytableau}

\captionof{figure}{Hook lengths for $\lambda=(5,4,1)$}
\end{center}
\smallskip
By inspection, we see that $\lambda$ is an $\ell$-core for every prime $\ell >7.$
\end{example}
\smallskip

 To state the asymptotics formulas, we let $L\left(\leg{\cdot}{\ell},s\right)$ be the Dirichlet $L$-function
for the Legendre symbol $\leg{\cdot}{\ell},$ and let
\begin{equation}\label{alpha}
\alpha_{\ell}:= \frac{(2\pi)^{\frac{\ell-1}{2}}}{\left(\frac{\ell-3}{2}\right)! \cdot
\ell^{\frac{\ell}{2}}\cdot L\left(\leg{\cdot}{\ell},\frac{\ell-1}{2}\right)}.
\end{equation}
By the functional equations of these Dirichlet $L$-functions and the theory of generalized Bernoulli numbers, we have that $1/\alpha_{\ell}$ is always a positive integer (see p. 339 of \cite{G-O}). For example, we have
 $1/\alpha_5=1,$ $1/\alpha_7=8,$ $1/\alpha_{11}=1275,$ and  $1/\alpha_{13}=33463.$
 In addition, we require the integers $\delta_{\ell}:=(\ell^2-1)/24,$ and the
  {\it twisted Legendre symbol divisor functions}
 \begin{equation}\label{sigma}
\sigma_{\ell}(n):=\sum_{1\leq d\mid n} \leg{n/d}{\ell}d^{\frac{\ell-3}{2}}.
\end{equation}
In terms of these quantities and functions, we obtain the following asymptotics for $Z_{\ell}(n).$

\begin{theorem}\label{Theorem1}
 If $\ell\geq 5$ is prime, then as $n\rightarrow +\infty$ we have
 $$
Z_{\ell}(n)\sim \alpha_{\ell} \cdot \sigma_{\ell}(n+\delta_{\ell}) p(n)\gg_{\ell} n^{\frac{\ell-5}{2}} e^{\pi \sqrt{2n/3}}.
$$
\end{theorem}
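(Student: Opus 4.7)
The plan begins with a vanishing consequence of the Murnaghan--Nakayama rule: if $\lambda$ is an $\ell$-core and $\mu$ contains at least one part divisible by $\ell$, then $\chi_\lambda(\mu)=0$. On the $\ell$-abacus, an $\ell$-core is precisely a configuration in which each runner has its beads flush at the top, so no bead can slide up within its runner; equivalently, $\lambda$ admits no rim hook of length $k\ell$ for any $k\geq 1$. Iterating Murnaghan--Nakayama along a part $m=k\ell$ of $\mu$ therefore forces $\chi_\lambda(\mu)=0$. Writing $p^{(0)}_\ell(n)$ for the number of partitions of $n$ with no part divisible by $\ell$, this yields the sandwich
$$
c_\ell(n)\bigl(p(n)-p^{(0)}_\ell(n)\bigr)\;\leq\; Z_\ell(n)\;\leq\; c_\ell(n)\,p(n),
$$
the upper bound being the trivial count of all admissible pairs $(\lambda,\mu)$.

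The argument then reduces to two asymptotic inputs. The first is $p^{(0)}_\ell(n)=o(p(n))$: by Glaisher's bijection, $p^{(0)}_\ell(n)$ counts $\ell$-regular partitions of $n$, whose generating function $\prod_{n\geq 1}(1-q^{\ell n})/(1-q^n)$ grows like $\exp\!\bigl(\pi\sqrt{2(\ell-1)n/(3\ell)}\bigr)$ by a Meinardus-type saddle-point computation. This is exponentially smaller than Hardy--Ramanujan's $p(n)\sim\tfrac{1}{4n\sqrt{3}}\,e^{\pi\sqrt{2n/3}}$, and hence $Z_\ell(n)\sim c_\ell(n)\,p(n)$.

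The second input is the asymptotic $c_\ell(n)\sim\alpha_\ell\,\sigma_\ell(n+\delta_\ell)$. For this I would observe that
$$
\sum_{n\geq 0} c_\ell(n)\,q^{n+\delta_\ell}\;=\; q^{\delta_\ell}\prod_{n\geq 1}\frac{(1-q^{\ell n})^\ell}{1-q^n}
$$
is a holomorphic modular form of weight $(\ell-1)/2$ on $\Gamma_0(\ell)$ carrying the Legendre character $\leg{\cdot}{\ell}$. Its Eisenstein projection has Fourier coefficients $\alpha_\ell\,\sigma_\ell(n+\delta_\ell)$, with $\alpha_\ell$ precisely the $L$-value normalization~\eqref{alpha} dictated by the functional equation, while Deligne's bound controls the orthogonal cuspidal part by $O_\ell\bigl(n^{(\ell-3)/4+\epsilon}\bigr)$. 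Once one establishes the pointwise lower bound $\sigma_\ell(n+\delta_\ell)\gg_\ell n^{(\ell-3)/2}$, the cuspidal error is absorbed; combining this with the first input and Hardy--Ramanujan then delivers both the asymptotic $Z_\ell(n)\sim\alpha_\ell\,\sigma_\ell(n+\delta_\ell)\,p(n)$ and the stated lower bound $\gg_\ell n^{(\ell-5)/2}e^{\pi\sqrt{2n/3}}$.

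The hard part is precisely this pointwise control of $\sigma_\ell(n+\delta_\ell)$: the Legendre symbol permits genuine cancellation in the twisted divisor sum, so the required uniform lower bound cannot be read off from the definition itself and must be extracted from the modular-form structure. A natural route is to combine Granville--Ono's positivity theorem $c_\ell(n)>0$ (for $\ell\geq 5$, $n\geq 0$) with the Deligne bound to force the Eisenstein coefficient to carry the full growth of $c_\ell(n)$; making this uniform in $n$ is the one step that is not purely formal.
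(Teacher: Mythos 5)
Your skeleton coincides with the paper's: the Murnaghan--Nakayama vanishing criterion yields the sandwich $c_\ell(n)\bigl(p(n)-p_\ell(n)\bigr)\le Z_\ell(n)\le c_\ell(n)\,p(n)$ (the lower bound is Lemma~\ref{Prop1}(2); the upper bound is the trivial count of pairs with $\lambda$ an $\ell$-core, which the paper leaves implicit), Hardy--Ramanujan together with an asymptotic for $\ell$-regular partitions shows the two sides agree to first order, and the asymptotic $c_\ell(n)\sim\alpha_\ell\sigma_\ell(n+\delta_\ell)$ converts this into the stated formula. The only real difference is one of economy: the paper obtains both inputs by citation --- Hagis (Theorem~\ref{Hagis}) for $p_\ell(n)=o(p(n))$, and Granville--Ono (Theorem~\ref{GranvilleOno}, parts (4)--(6)) for the asymptotic and the polynomial lower bound on $c_\ell(n)$ --- whereas you propose to rederive them via a saddle-point analysis and via the Eisenstein/cuspidal decomposition of $\eta(\ell z)^\ell/\eta(z)$ with Deligne's bound. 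That is precisely how those references prove their results, so your route is correct, just longer than necessary.

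The one step you leave open --- the pointwise bound $\sigma_\ell(n+\delta_\ell)\gg_\ell n^{(\ell-3)/2}$, which you call ``the hard part'' --- is in fact elementary for $\ell\ge 7$ and does not require the modular-form structure at all. From the definition (\ref{sigma}), the divisor $d=n$ contributes $n^{(\ell-3)/2}$, while the remaining divisors contribute at most $\sum_{k\ge 2}(n/k)^{(\ell-3)/2}=\bigl(\zeta(\tfrac{\ell-3}{2})-1\bigr)n^{(\ell-3)/2}$ in absolute value; since $\zeta(\tfrac{\ell-3}{2})<2$ for $\ell\ge 7$, one gets $\sigma_\ell(n)\ge\bigl(2-\zeta(\tfrac{\ell-3}{2})\bigr)n^{(\ell-3)/2}>0$ with no cancellation to worry about (this is essentially the content of Theorem~\ref{GranvilleOno}(6)). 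Your concern about cancellation is genuine only at $\ell=5$, where the comparison series diverges and $\sigma_5(m)$ can drop to $\phi(m)$ when $m$ is squarefree with every prime factor a quadratic nonresidue modulo $5$; there the paper falls back on the exact identity $c_5(n)=\sigma_5(n+1)$ and positivity, and the exponent required in the final display is only $n^0$. So there is no gap in your strategy, but you should close this step with the elementary divisor estimate rather than defer it to a non-formal uniformity argument.
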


\begin{remark}
Apart from a density zero subset, we have that $Z_\ell(n)=0$ when $\ell \in \{2, 3\}$ (see \cite{G-O}).
\end{remark}

As a corollary, we find that $Z(n)/p(n)$ grows faster than any power of $n.$
\begin{corollary}\label{Corollary1}
 If $d>0,$ then
$$
\lim_{n\rightarrow +\infty}\frac{Z(n)}{p(n)\cdot n^d}=+\infty.
$$
\end{corollary}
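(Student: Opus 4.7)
The plan is to deduce Corollary~\ref{Corollary1} directly from Theorem~\ref{Theorem1} by letting the prime $\ell$ grow with the exponent $d$. The only ingredients needed beyond Theorem~\ref{Theorem1} are the trivial monotonicity $Z(n)\geq Z_{\ell}(n)$ for every prime $\ell\geq 5$, and the Hardy--Ramanujan asymptotic
$$
p(n)\sim \frac{1}{4n\sqrt{3}}\,e^{\pi\sqrt{2n/3}}.
$$

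First, I would note that the pairs $(\lambda,\mu)$ counted by $Z_{\ell}(n)$ form a subset of those counted by $Z(n)$, so $Z(n)\geq Z_{\ell}(n)$. Combining the lower bound $Z_{\ell}(n)\gg_{\ell} n^{(\ell-5)/2}\,e^{\pi\sqrt{2n/3}}$ from Theorem~\ref{Theorem1} with the Hardy--Ramanujan estimate $p(n)\cdot n^d \ll n^{d-1}\,e^{\pi\sqrt{2n/3}}$ yields, for each fixed prime $\ell\geq 5$,
$$
\frac{Z(n)}{p(n)\cdot n^d}\;\geq\;\frac{Z_{\ell}(n)}{p(n)\cdot n^d}\;\gg_{\ell}\; n^{(\ell-3)/2 - d}.
$$

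Given $d>0$, I would then choose a prime $\ell\geq 5$ satisfying $\ell > 2d+3$, so that the exponent $(\ell-3)/2 - d$ is strictly positive. With this choice of $\ell$, the implied constant in $\gg_{\ell}$ is fixed, and the right-hand side tends to $+\infty$ as $n\to+\infty$, giving the corollary.

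There is no real obstacle here; the statement is essentially immediate from Theorem~\ref{Theorem1} once one recognizes that $\ell$ may be chosen as a function of $d$. The only point deserving a brief remark is that the lower bound $\gg_{\ell}$ in Theorem~\ref{Theorem1} has an $\ell$-dependent constant, but this is harmless since $\ell$ is fixed before taking the limit in $n$.
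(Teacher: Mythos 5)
Your argument is correct and is exactly the paper's proof, which simply says the claim "follows from Theorem~\ref{Theorem1} by choosing primes $\ell \rightarrow +\infty$"; you have merely filled in the (easy) details of that choice. The exponent bookkeeping $n^{(\ell-3)/2-d}$ and the selection $\ell>2d+3$ are both right.
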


\begin{remark} We note that Corollary~\ref{Corollary1} is weaker than
$$Z(n)/p(n)^2 \gg 1/\log n,
$$
which can be found in the discussion after Lemma 2.3 of Peluse's paper \cite{P}.
\end{remark}

We turn to the problem of describing the zero entries in $\CC_n$ where both indices $\lambda$ and $\mu$ are $\ell$-core partitions.
For prime $\ell$ and large $n,$
the entries in $\CC_n$ indexed by $\ell$-core pairs $(\lambda, \mu)$ always have
$\chi_{\lambda}(\mu)=0.$

\begin{theorem}\label{Theorem2}
Suppose that $\ell$ is prime, and let $N_{\ell}:=(\ell^6- 2\ell^5 + 2\ell^4 -3\ell^2 + 2\ell)/24.$ If $n>N_{\ell}$ and $\lambda, \mu\vdash n$  are $\ell-$core partitions, then
$\chi_{\lambda}(\mu)=0.$
\end{theorem}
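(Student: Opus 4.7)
The plan is to apply the Murnaghan--Nakayama rule, extracting any chosen part of $\mu$ first. For any part $k$ of $\mu$, one has
$$\chi_\lambda(\mu)=\sum_{\xi} (-1)^{\mathrm{ht}(\xi)-1}\,\chi_{\lambda\setminus\xi}(\mu\setminus\{k\}),$$
where $\xi$ ranges over rim hooks of $\lambda$ of size $k$. Since the $\ell$-core $\lambda$ has no hook of length divisible by $\ell$, it admits no rim hook of any such size, and the sum is empty whenever $\ell\mid k$. Hence if $\mu$ has even a single part divisible by $\ell$, choosing $k$ to be that part already gives $\chi_\lambda(\mu)=0$. The theorem therefore reduces to the purely combinatorial claim: every $\ell$-core partition $\mu\vdash n$ with $n>N_\ell$ has at least one part divisible by $\ell$.

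For that reduction I would parametrize $\ell$-cores via their $\ell$-abacus, so that each $\ell$-core $\mu$ corresponds to a charge vector $\vec c=(c_0,\dots,c_{\ell-1})\in\Z^{\ell}$ with $\sum_{r}c_r=0$, size $|\mu|=\tfrac{\ell}{2}\sum_{r} c_r^{2}+\sum_{r} r\,c_r$, and parts $\mu_i=\beta_i-(N-i)$ determined by the beta-numbers $\beta_i$. Choosing $N$ divisible by $\ell$ and using that a bead on runner $r$ has $\beta_i\equiv r\pmod\ell$, one sees that $\ell\mid\mu_i$ exactly when the rank $i$ of that bead satisfies $i+r\equiv 0\pmod\ell$. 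So ``$\mu$ has no part divisible by $\ell$'' is precisely a forbidden-residue condition on the ranks of the finitely many non-phantom beads on each runner.

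The final and most technical step is to convert this residue obstruction into the bound $|\mu|\le N_\ell$. The rank of the $j$-th bead on runner $r$ can be written as the piecewise-linear expression
$$i_{r,j}=j+\sum_{r'>r}\max(0,\,n_{r'}-n_{r}+j)+\sum_{r'<r}\max(0,\,n_{r'}-n_{r}+j-1),$$
so the admissible charge vectors form a discrete region cut out by the inequalities $i_{r,j}+r\not\equiv 0\pmod\ell$ for every $(r,j)$ with $\mu_{i_{r,j}}>0$; the task is then to extremize the quadratic $\tfrac{\ell}{2}\sum c_r^{2}+\sum r c_r$ over this region. The factored form $N_\ell=\delta_\ell\cdot\ell(\ell-1)\bigl(\ell(\ell-1)+2\bigr)$ with $\delta_\ell=(\ell^{2}-1)/24$ suggests that the extremal $\vec c$ simultaneously saturates several of the ``rank-equals-forbidden-residue'' boundaries, and pinning down this extremal configuration and verifying that it produces the stated bound is the main obstacle in the argument.
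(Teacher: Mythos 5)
Your first step — using Murnaghan--Nakayama to conclude $\chi_\lambda(\mu)=0$ as soon as $\mu$ has a part divisible by $\ell$ and $\lambda$ is an $\ell$-core — is correct and is exactly the reduction the paper makes. Your factorization $N_\ell=\delta_\ell\cdot\ell(\ell-1)\bigl(\ell(\ell-1)+2\bigr)$ is also a correct identity. But the proposal then stops at precisely the point where the real content of the theorem lies: the claim that every $\ell$-core of size $n>N_\ell$ has a part divisible by $\ell$ (the paper's Theorem~\ref{EleanorTheorem}). You set up a charge-vector parametrization and a ``forbidden residue'' condition on bead ranks, and then state that extremizing the quadratic over the resulting region ``is the main obstacle in the argument.'' That obstacle is the theorem; leaving it unproved is a genuine gap, and it is not clear that the region you describe is even correctly cut out (the condition $\mu_{i_{r,j}}>0$ couples the constraints to the charge vector in a way that makes a direct Lagrange-style extremization unwieldy).

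The paper closes this gap with two structural lemmas about the $\ell$-abacus $(0,b_1,\dots,b_{\ell-1})$ of an $\ell$-core. First (Lemma~\ref{lem: beadjumps}): if there is a threshold $k$ with every $b_i\le k$ or $b_i\ge k+\ell$, and some column reaches $k+\ell$, then the $\ell$ consecutive beads in rows $k+1,\dots,k+\ell$ of that column yield parts forming an arithmetic progression of length $\ell$ with common difference $\ell-\delta$ coprime to $\ell$ (where $\delta$ is the number of tall columns), so some part is divisible by $\ell$. Second (Lemma~\ref{lem: permute}): sorting the column heights into weakly increasing order only increases $|\lambda|$. Together these force any $\ell$-regular $\ell$-core, after sorting, to satisfy $b_i\le i(\ell-1)$, so it is dominated by the explicit abacus $(0,\ell-1,2(\ell-1),\dots,(\ell-1)^2)$, whose partition has size exactly $N_\ell$. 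Your plan would need some substitute for the ``gap of size $\ell$ forces a complete residue system among the parts'' observation — without it, the forbidden-residue conditions alone do not obviously bound the charge vector, and the extremal configuration cannot be pinned down.
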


If $Z^*_{\ell}(n)$ denotes the number of vanishing entries $\chi_{\lambda}(\mu)=0$
indexed by $\ell$-core partitions $\lambda, \mu \vdash n,$ then we have the following corollary.

\begin{corollary}\label{Corollary2} For primes $\ell,$ the following are true.

\smallskip
\noindent
(1) Apart from a density zero subset, we have that $Z^*_\ell(n)=0$ when $\ell\in \{2, 3\}.$

\smallskip
\noindent
(2) If $\ell\geq 5$, then as $n\rightarrow +\infty$ we have
$$
Z^*_{\ell}(n)\sim \alpha_{\ell}^2 \cdot \sigma_{\ell}(n+\delta_{\ell})^2 \gg_{\ell} n^{\ell-3}.
$$
\end{corollary}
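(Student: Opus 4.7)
The plan is to reduce Corollary~\ref{Corollary2} to the known asymptotic behavior of $c_\ell(n)$, the number of $\ell$-core partitions of $n$, using Theorem~\ref{Theorem2}. The pivotal observation is that for any prime $\ell$ and any $n > N_\ell$, Theorem~\ref{Theorem2} forces $\chi_\lambda(\mu) = 0$ for every pair $(\lambda, \mu)$ of $\ell$-core partitions of $n$. Consequently, one has the exact identity
$$
Z^*_\ell(n) \;=\; c_\ell(n)^2
$$
for all sufficiently large $n$, so the corollary reduces entirely to a statement about the $\ell$-core partition function.

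For part (2), with $\ell \geq 5$, the next step is to invoke the Granville--Ono asymptotic from \cite{G-O},
$$
c_\ell(n) \;\sim\; \alpha_\ell \cdot \sigma_\ell(n + \delta_\ell) \;\gg_\ell\; n^{(\ell-3)/2},
$$
in the same sense as the lower bound in Theorem~\ref{Theorem1}. Indeed, the proof of Theorem~\ref{Theorem1} will furnish exactly this input: there $\mu$ ranges over all $p(n)$ partitions, whereas here $\mu$ is restricted to one of the $c_\ell(n)$ many $\ell$-cores. Squaring the asymptotic then yields
$$
Z^*_\ell(n) \;\sim\; \alpha_\ell^2 \cdot \sigma_\ell(n+\delta_\ell)^2 \;\gg_\ell\; n^{\ell-3},
$$
which is the stated conclusion.

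For part (1), with $\ell \in \{2, 3\}$, the identity $Z^*_\ell(n) = c_\ell(n)^2$ still holds for $n > N_\ell$ since Theorem~\ref{Theorem2} is stated for all primes. The conclusion is then a direct consequence of the density-zero nature of $\{n : c_\ell(n) > 0\}$ in these two exceptional cases: $c_2(n) = 1$ exactly when $n$ is a triangular number $k(k+1)/2$ and vanishes otherwise, while $c_3(n) > 0$ only on a density-zero set governed by the prime factorization of $3n + 1$ (see \cite{G-O}). In both cases $Z^*_\ell(n) = 0$ off a density-zero subset of $n$.

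The main obstacle is not the present corollary but Theorem~\ref{Theorem2} itself: once that result collapses $Z^*_\ell(n)$ to $c_\ell(n)^2$, the asymptotics reduce to squaring the Granville--Ono formula, together with elementary density-zero facts for $\ell \in \{2, 3\}$. The only minor bookkeeping is that the finitely many $n \leq N_\ell$ are absorbed harmlessly into the error in the asymptotic.
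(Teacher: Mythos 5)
Your proposal is correct and follows essentially the same route as the paper: Theorem~\ref{Theorem2} gives $Z^*_\ell(n)=c_\ell(n)^2$ for all sufficiently large $n$, after which part (2) follows by squaring the Granville--Ono asymptotic and lower bound for $c_\ell(n)$ (Theorem~\ref{GranvilleOno} (4--6)), and part (1) follows from the formulas for $c_2(n)$ and $c_3(n)$. No gaps.
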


To obtain these results, we use the well-known vanishing result that follows from the Murnaghan–Nakayama rule and says that
$\chi_\lambda(\mu)=0$ whenever $\mu$ has a part that is not the length of any hook in $\lambda$. Therefore, our goal is reduced to
counting pairs of  partitions
$(\lambda, \mu)$ of large $n,$ where $\mu$ has a part that is a multiple of $\ell,$ and  where $\lambda$ is an $\ell$-core. 
Theorem~\ref{Theorem1} is obtained by estimating these counts using asymptotics and lower bounds for various partition functions due to Hardy and Ramanujan, Hagis, and Granville and the second author.

Theorem~\ref{Theorem2} concerns the cases where $(\lambda, \mu)$ are  both $\ell$-cores, and is a consequence of the fact (see Theorem~\ref{EleanorTheorem})
that every large $\ell$-core has a part that is a multiple of $\ell.$  This fact is proved using  the ``abacus theory'' of $\ell$-cores, and is a generalization of Section~3 of \cite{O-S} by Sze and the second author in the case of  $4$-core partitions.
Corollary~\ref{Corollary2} then follows from the asymptotics for $c_{\ell}(n)$ due to Granville and the second author.
 
 This paper is organized as follows.
Section~\ref{NutsAndBolts} recalls well-known vanishing result and bounds, as well as the asymptotics and estimates for the relevant partition functions. Section~\ref{Abacus} gives the abacus theory of $\ell$-cores and the statement and proof of Theorem~\ref{EleanorTheorem}.  In Section~\ref{Proofs} we employ these results to prove Theorems~\ref{Theorem1} and \ref{Theorem2}, and Corollaries~\ref{Corollary1} and ~\ref{Corollary2}. 

\section{Acknowledgements}
\noindent The authors thank Sarah Peluse and Richard Stanley as well as the referee for helpful comments that improved this paper.

\section{Nuts and Bolts}\label{NutsAndBolts}

In this section we recall essential facts  that we require for the proofs of our results.
We first state a criterion that guarantees the vanishing of character values, and then we give estimates for the relevant partition functions.

\subsection{Criterion for the vanishing of $\chi_{\lambda}(\mu)$}

Here we recall a standard partition theoretic criterion that guarantees the vanishing of a character value $\chi_{\lambda}(\mu)$. Suppose that $\lambda=(\lambda_1,\dots, \lambda_s)$ and
$\mu=(\mu_1, \dots, \mu_t)$ are partitions of size $n$, and let
  $\{h_{\lambda}(i,j)\}$ be the multiset of hook lengths for $\lambda.$ Thanks to the Murnaghan-Nakayama formula (for example, see Theorem 2.4.7 of \cite{J-K}),
we have that $\chi_{\lambda}(\mu)=0$ when $\{ \mu_i\}$ is not a subset of $\{h_{\lambda}(i,j)\}$.

Given a prime $\ell,$
this immediately gives natural families of vanishing character table entries indexed by
pairs of partitions $(\lambda, \mu)$ of $n$, where $\mu$ has a part that is a multiple of $\ell,$ and
$\lambda$ is an $\ell$-core partition.
To make use of this observation, we recall that a partition $\mu$ is  {\it   $A$-regular} if none of its parts $\mu_i$ are multiples of $A.$ 
If $p_{A}(n)$ denotes the number of $A$-regular partitions of $n$, then one easily confirms the generating function
$$
\sum_{n=0}^{\infty} p_{A}(n)q^n=\prod_{n=1}^{\infty} \frac{(1-q^{An})}{(1-q^n)}=
\prod_{n=1}^{\infty}\left (1+q^n+q^{2n}+\dots+q^{(A-1)n}\right),
$$
which shows that $p_{A}(n)$ also is the number of partitions of $n$ where parts appear at most 
 $A-1$ times.
In terms of $p(n), p_{\ell}(n)$ and $c_{\ell}(n)$, we have the following lower bounds for $Z(n).$

\begin{lemma}\label{Prop1} If $\ell$ is prime, then the following are true.

\smallskip
\noindent
(1)  If $\mu\vdash n$ is not an $\ell$-regular partition and $\lambda\vdash n$ is an $\ell$-core partition, then
$\chi_{\lambda}(\mu)=0.$

\smallskip
\noindent
(2) If $n$ is a positive integer, then we have
$$
Z_{\ell}(n)\geq \left(p(n)-p_{\ell}(n)\right)\cdot c_{\ell}(n).
$$
\end{lemma}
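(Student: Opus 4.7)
For part (1), the plan is to invoke the Murnaghan--Nakayama vanishing criterion recalled immediately before the lemma: $\chi_{\lambda}(\mu)=0$ whenever some part of $\mu$ fails to appear as a hook length of $\lambda$. If $\mu$ is not $\ell$-regular, then by definition some part $\mu_i$ is a multiple of $\ell$; but $\lambda$ being an $\ell$-core means that no hook length $h_{\lambda}(k,j)$ is a multiple of $\ell$. Hence $\mu_i\notin\{h_{\lambda}(k,j)\}$, and the criterion gives $\chi_{\lambda}(\mu)=0$.

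For part (2), the strategy is to lower-bound $Z_{\ell}(n)$ by the cardinality of the family of vanishing pairs produced in part (1). By definition $Z_{\ell}(n)$ counts ordered pairs $(\lambda,\mu)$ with $\chi_{\lambda}(\mu)=0$ and $\lambda$ an $\ell$-core of $n$, so every pair in which $\lambda\vdash n$ is an $\ell$-core and $\mu\vdash n$ is not $\ell$-regular is counted. The number of $\ell$-cores of $n$ is $c_{\ell}(n)$, and the number of non-$\ell$-regular partitions of $n$ is $p(n)-p_{\ell}(n)$ (total partitions minus $\ell$-regular ones, whose product-formula generating function was just recalled). The conditions on $\lambda$ and on $\mu$ decouple, so this family has size $c_{\ell}(n)\cdot\bigl(p(n)-p_{\ell}(n)\bigr)$, giving the claimed inequality.

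There is essentially no obstacle here: both pieces follow from tools already assembled in the excerpt, and the only thing to verify is that the two conditions (one on $\lambda$, one on $\mu$) are genuinely independent so that the product count is valid. The real purpose of the lemma is to package the $\ell$-core/$\ell$-regular vanishing phenomenon in a product form, so that the asymptotic estimates for $c_{\ell}(n)$ (from Granville and the second author) and for $p(n)-p_{\ell}(n)$ (via Hagis-type estimates) can be multiplied together in Section~\ref{Proofs} to yield Theorem~\ref{Theorem1}.
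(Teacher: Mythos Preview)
Your proof is correct and matches the paper's own argument essentially line for line: part~(1) is the Murnaghan--Nakayama vanishing applied to a part of $\mu$ divisible by $\ell$ that cannot be a hook length of the $\ell$-core $\lambda$, and part~(2) is the product count of the independently chosen $\lambda$ and $\mu$. The paper's proof is in fact briefer than yours, omitting the remarks about decoupling and the role of the lemma in later sections.
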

\begin{proof}

\noindent
(1) By hypothesis, $\mu$ is not $\ell$-regular, meaning that it has a part that is a multiple of $\ell.$
As $\lambda$ is an $\ell$-core, none of its hook lengths are multiples of $\ell.$ Therefore, $\chi_{\lambda}(\mu)=0$ by Murnaghan-Nakayama.
\smallskip
\noindent
(2) The number of partitions of $n$ that are not $\ell$-regular is $p(n)-p_{\ell}(n).$ Therefore, (1) gives the conclusion that
$$
Z_{\ell}(n) {\color{black}\ \geq \ } (p(n)-p_{\ell}(n))c_{\ell}(n).
$$ \end{proof}

\subsection{Estimates for some partition functions}\label{estimates}

Here we recall asymptotics and lower bounds for the partition functions we require to prove Theorem~\ref{Theorem1} and Corollary~\ref{Corollary2}. First we have the celebrated Hardy-Ramanujan asymptotic for $p(n).$

\begin{theorem}\label{HR}
As $n\rightarrow +\infty$, we have
$$
p(n)\sim \dfrac{1}{4n\sqrt{3}}\cdot \exp(\pi \sqrt{2n/3}).
$$
\end{theorem}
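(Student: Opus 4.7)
My plan is to prove the Hardy--Ramanujan asymptotic by the circle method, exploiting the modular transformation of the Dedekind eta function to control the generating function $F(q) := \prod_{n\geq 1}(1-q^n)^{-1}$ near its dominant singularity at $q=1$. Starting from Cauchy's formula on a circle $|q| = e^{-\alpha}$ for a parameter $\alpha > 0$ to be chosen, I would write
$$p(n) = e^{n\alpha}\int_{-1/2}^{1/2} F(e^{-\alpha + 2\pi i x})\, e^{-2\pi i n x}\, dx,$$
and select $\alpha$ to be the saddle point of the real integrand.

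The first ingredient is the asymptotic expansion
$$\log F(e^{-t}) = \frac{\pi^2}{6t} - \frac{1}{2}\log\frac{2\pi}{t} - \frac{t}{24} + O\!\left(e^{-c/t}\right)$$
as $t \to 0^+$. This follows from the modular relation $\eta(-1/\tau) = \sqrt{-i\tau}\,\eta(\tau)$ specialized to $\tau = it/(2\pi)$: after inverting, the product appearing on the right side becomes $\prod(1-e^{-4\pi^2/t})^{-1}$, which contributes only the indicated exponentially small error. Optimizing the real exponent $n\alpha + \pi^2/(6\alpha)$ forces the choice $\alpha = \pi/\sqrt{6n}$, at which the leading term equals $\pi\sqrt{2n/3}$, exactly the exponent predicted by Theorem~\ref{HR}.

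The remainder of the argument is a major/minor arc decomposition. On the \emph{major arc} $|x| \leq x_0$ with $x_0$ of order $n^{-3/4}$, one substitutes the modular approximation above uniformly in $x$, expands the phase to quadratic order, and evaluates the resulting Gaussian integral; the normalization $\frac{1}{2}\log(t/(2\pi))$ in the expansion combines with the width of the saddle to produce the precise constant $1/(4n\sqrt{3})$. On the \emph{minor arc} $|x| > x_0$, the argument $e^{-\alpha + 2\pi i x}$ is far from $q=1$, so one applies the relevant $\mathrm{SL}_2(\mathbb{Z})$ transformation sending the nearest Farey singularity $p/q$ to the cusp at infinity, and uses the resulting product expansion to show that $|F(e^{-\alpha + 2\pi i x})|$ is smaller than its saddle value by a factor exponentially small in $\sqrt{n}$.

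The main obstacle is the minor-arc estimate: giving a uniform bound on $|F|$ along the whole integration circle outside a shrinking neighborhood of $q=1$. The cleanest path is to invoke the \emph{full} modular action on $\eta$ rather than the single transformation used on the major arc, and to verify that the contributions from secondary Farey singularities are all of strictly smaller exponential order than the contribution from $q=1$. This bookkeeping is the technical heart of the Hardy--Ramanujan argument; pushing it further, rather than merely discarding these contributions, is what produces the Rademacher exact formula, but for the asymptotic stated here an order-of-magnitude estimate suffices.
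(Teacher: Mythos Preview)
Your outline is a correct sketch of the classical circle-method proof of the Hardy--Ramanujan asymptotic, and the technical points you flag (the eta transformation giving $\log F(e^{-t}) = \pi^2/(6t) - \tfrac{1}{2}\log(2\pi/t) - t/24 + O(e^{-c/t})$, the saddle at $\alpha = \pi/\sqrt{6n}$, the major/minor arc split with minor arcs handled via the full $\mathrm{SL}_2(\mathbb{Z})$ action on $\eta$) are all accurate.

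That said, the paper does not prove this theorem at all: it is stated in Section~\ref{estimates} purely as a recalled classical fact (``the celebrated Hardy--Ramanujan asymptotic for $p(n)$'') and used as a black box in the proof of Theorem~\ref{Theorem1}. So there is no ``paper's own proof'' to compare against; your proposal supplies far more than the paper itself does. If you intend to include a proof, what you have written is the right skeleton, though for the purposes of this paper a citation would suffice.
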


Hagis obtained asymptotics for $p_A(n),$ the number of $A$-regular partitions of $n.$ Letting $t=A-1$ in Corollary 4.2 of \cite{H}, we have the following asymptotic formula.

\begin{theorem}\label{Hagis}
If $A\geq 2,$ then we have
$$
p_A(n)=C_A (24n-1+A)^{-\frac{3}{4}}\exp\left(C\sqrt{\frac{A-1}{A}\left(n+\frac{A-1}{24}\right)}\right)
\left(1+O(n^{-\frac{1}{2}})\right),
$$
where $C:=\pi \sqrt{2/3}$ and $C_A:=\sqrt{12}A^{-\frac{3}{4}}(A-1)^{\frac{1}{4}}.$ 
\end{theorem}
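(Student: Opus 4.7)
The plan is to derive this asymptotic via the Hardy--Ramanujan circle method applied to the generating function
$$F(q) := \sum_{n \geq 0} p_A(n)\,q^n = \prod_{n=1}^\infty \frac{1-q^{An}}{1-q^n}.$$
Setting $q = e^{2\pi i \tau}$, this product is the eta quotient $F(q) = q^{-(A-1)/24}\,\eta(A\tau)/\eta(\tau)$, a weakly holomorphic weight-zero modular object on a congruence subgroup of $\mathrm{SL}_2(\Z)$. The transformation laws of the Dedekind eta function therefore supply precise local expansions of $F$ at every cusp, which is exactly what the circle method requires.

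My first step is to write Cauchy's formula
$$p_A(n) = \frac{1}{2\pi i}\oint_{|q|=e^{-2\pi t_n}}F(q)\,q^{-n-1}\,dq$$
along a circle of radius $r_n = e^{-2\pi t_n}$ at a saddle $t_n$ to be determined. Using the classical identity $\eta(-1/\tau)=\sqrt{-i\tau}\,\eta(\tau)$ at the principal cusp $\tau=0$, one obtains
$$F(e^{-2\pi t}) \,\sim\, \frac{1}{\sqrt{A}}\,\exp\!\left(\frac{\pi(A-1)}{12A\,t}+\frac{\pi(A-1)\,t}{12}\right)\qquad (t\to 0^+).$$
Minimizing $\log F(e^{-2\pi t})+2\pi n t$ in $t$ then gives
$$t_n = \sqrt{\frac{A-1}{A(24n+A-1)}},$$
and the resulting exponent evaluates to $C\sqrt{((A-1)/A)(n+(A-1)/24)}$ with $C=\pi\sqrt{2/3}$, matching the exponential in the theorem. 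A Gaussian integration of width $\asymp t_n^{3/2}$ across the major arc centered at $x=0$ then combines with the $1/\sqrt{A}$ modular factor and the second derivative of the exponent at the saddle to produce the subexponential prefactor $C_A(24n-1+A)^{-3/4}$ after direct simplification.

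The principal obstacle is controlling the contributions of the minor arcs around the remaining roots of unity $e^{2\pi i a/k}$ with $k \geq 2$. Applying the general $\mathrm{SL}_2(\Z)$-transformation of $\eta$ to $F$ near the cusp $a/k$, a short computation gives the real exponential growth rate
$$\frac{\pi}{12 k^2 t'}\left(1-\frac{\gcd(k,A)^2}{A}\right)$$
in the local uniformizer $t'$, and a brief case analysis on $\gcd(k,A)$ shows this is strictly smaller than the principal rate $\pi(A-1)/(12At)$ for every $k\geq 2$ (indeed it is non-positive whenever $A \mid k$). A uniform saddle-point estimate over the Farey dissection then absorbs these contributions into the $O(n^{-1/2})$ correction arising from the next-order term in the asymptotic expansion of $\eta$ at the principal cusp, completing the proof in the form stated by Hagis.
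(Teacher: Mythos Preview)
The paper does not supply a proof of this statement; it is quoted directly from Hagis's work (Corollary~4.2 of \cite{H} with the substitution $t=A-1$), so there is no in-paper argument to compare against. Your outline is the standard circle-method derivation for this class of eta-quotient generating functions and is, in outline, correct: the modular inversion of $\eta(A\tau)/\eta(\tau)$ at the cusp $0$ produces the asymptotic $A^{-1/2}\exp(\pi(A-1)/(12At)+\pi(A-1)t/12)$, the saddle $t_n=\sqrt{(A-1)/(A(24n+A-1))}$ yields exactly the stated exponential, and the Laplace/Gaussian step together with the $A^{-1/2}$ factor gives the prefactor $C_A(24n+A-1)^{-3/4}$. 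Your cusp-by-cusp comparison of growth rates $(1-\gcd(k,A)^2/A)/(12k^2)$ is also the right mechanism for suppressing the minor arcs.

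As a proof rather than a plan, two points would need to be filled in before the $O(n^{-1/2})$ error is actually established: a uniform bound on the minor-arc contribution over the full Farey dissection (your ``brief case analysis'' and ``uniform saddle-point estimate'' are assertions, not arguments), and the second-order expansion at the dominant cusp that produces the $n^{-1/2}$ correction rather than merely $o(1)$. These are routine but not automatic. Hagis's original paper carries out exactly this program with full error tracking, so your sketch is faithful to the cited source.
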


Finally, we recall facts about $c_t(n),$ the number of $t$-core partitions of $n$ that were obtained by Granville and the second author in \cite{G-O}. In terms of $\alpha_{\ell}$ defined in (\ref{alpha}), and the twisted Legendre symbol divisor functions $\sigma_{\ell}(n)$ defined in (\ref{sigma}), we have the following theorem.

\begin{theorem}\label{GranvilleOno} The following are true.

\smallskip
\noindent
(1) We have that
$$
c_2(n)=\begin{cases} 1 \ \ \ \ \ \ &{\text {\rm if}}\ {\text {\rm $n$ is a triangular number,}}\\
0 \ \ \ \ \ \ &{\text {\rm otherwise}}.
\end{cases}
$$

\smallskip
\noindent
(2) If $n$ is a non-negative integer, then
$$
c_3(n)=\sum_{d\mid (3n+1)} \leg{d}{3}.
$$
In particular, $c_3(n)=0$ for almost all $n$.

\smallskip
\noindent
(3) If $t\geq 4$ and $n$ is a non-negative integer, then $c_t(n)>0.$

\smallskip
\noindent
(4) If $n$ is a non-negative integer, then
$c_5(n)=\sigma_5(n+1).$
\smallskip

\smallskip
\noindent
(5) If $\ell\geq 7$  is prime, then as $n\rightarrow +\infty$ we have
$$
c_{\ell}(n)\sim \alpha_{\ell}\cdot \sigma_{\ell}(n+\delta_{\ell}).
$$

\smallskip
\noindent
(6) If $\ell\geq 11$ is prime and $n$ is sufficiently large, then  we have
$$
c_{\ell}(n)> \frac{2\alpha_{\ell}}{5} \cdot n^{\frac{\ell-3}{2}}.
$$
\end{theorem}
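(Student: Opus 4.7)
The plan is to realize the generating series $\sum_{n \geq 0} c_\ell(n) q^n$ as $q^{-\delta_\ell} f_\ell(z)$, where $f_\ell(z) := \eta(\ell z)^\ell/\eta(z)$ and $\delta_\ell = (\ell^2-1)/24$. Standard eta-quotient criteria show that $f_\ell$ is a holomorphic modular form of weight $(\ell-1)/2$ on $\Gamma_0(\ell)$ with Nebentypus $\leg{\cdot}{\ell}$, and its Fourier coefficients at infinity are precisely the $c_\ell(n)$. The entire theorem is extracted by decomposing $f_\ell$ into its Eisenstein and cuspidal components and reading off the Fourier coefficients.

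First I would dispose of the small-weight cases. For $\ell=2$, Jacobi's triple product collapses the eta-quotient to a single theta series whose coefficients are supported exactly on triangular numbers, which is (1). For $\ell=3$, the weight is $1$ and $f_3$ is a theta series for the Eisenstein integers, from which one reads off $c_3(n) = \sum_{d \mid 3n+1} \leg{d}{3}$; this forces $c_3(n)=0$ whenever $3n+1$ has a prime factor $\equiv 2 \pmod{3}$ appearing to an odd power, a density-one condition, giving (2). For $\ell=5$, the space of weight-$2$ cusp forms on $\Gamma_0(5)$ with character $\leg{\cdot}{5}$ is trivial, so $f_5$ is pure Eisenstein and one obtains the exact identity $c_5(n) = \sigma_5(n+1)$ of (4).

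For (3), (5), and (6), I would write $f_\ell = E_\ell + g_\ell$, where $E_\ell$ is the Eisenstein series whose $n$-th Fourier coefficient equals $\alpha_\ell \cdot \sigma_\ell(n+\delta_\ell)$ (the constant $\alpha_\ell$ being forced by the functional equation of the associated Dirichlet $L$-function, as in \eqref{alpha}) and $g_\ell$ is a cusp form of weight $(\ell-1)/2$. Deligne's bound yields $a_n(g_\ell) \ll_\ell d(n)\, n^{(\ell-3)/4}$, while the dominant $d = n+\delta_\ell$ term in the divisor sum defining $\sigma_\ell(n+\delta_\ell)$ is of size $n^{(\ell-3)/2}$. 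Since $(\ell-3)/2 > (\ell-3)/4$ for $\ell \geq 5$, the Eisenstein contribution dominates for large $n$, proving the asymptotic (5). Combined with a finite verification for small $n$, this yields positivity for prime $t \geq 5$; the composite cases of (3), in particular $t=4$, need separate abacus-theoretic arguments.

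The main obstacle will be extracting the explicit constant $2/5$ in (6). Routine $O$-estimates are too coarse, so one needs an \emph{effective} bound on the Petersson norm of $g_\ell$, explicit in $\ell$, together with a lower bound on $\sigma_\ell(n+\delta_\ell)$ that holds uniformly in $n$ rather than merely on average. Tracking both contributions carefully for $\ell \geq 11$ and verifying finitely many small $n$ per prime then yields $c_\ell(n) > (2\alpha_\ell/5)\, n^{(\ell-3)/2}$; this quantitative separation of Eisenstein and cuspidal contributions is the technical heart of the argument.
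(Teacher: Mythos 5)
The paper does not prove this theorem at all: it simply cites Granville--Ono \cite{G-O} for each part (claims (2), (4), (5) are from p.~339--340, claim (3) is their Theorem~1, claim (6) is their Theorem~4). Your sketch is, in substance, a reconstruction of the argument in that reference --- realizing $\sum c_\ell(n)q^n$ as $q^{-\delta_\ell}\eta(\ell z)^\ell/\eta(z)$, splitting off the Eisenstein part whose coefficients are $\alpha_\ell\,\sigma_\ell(n+\delta_\ell)$, and beating the cusp form contribution --- so the approach is the right one and matches the source. Two small corrections. First, for (6) you do not need effective Petersson-norm bounds: the statement is only for ``sufficiently large $n$,'' so the ineffective asymptotic (5) suffices once you note the elementary uniform bound $\sigma_\ell(m)\geq \bigl(2-\zeta(\tfrac{\ell-3}{2})\bigr)m^{\frac{\ell-3}{2}}$, and for $\ell\geq 11$ one has $2-\zeta(4)\approx 0.918>2/5$; this also explains why (6) is stated only for $\ell\geq 11$, since $2-\zeta(2)\approx 0.355<2/5$ fails at $\ell=7$. (For the asymptotic itself even the trivial Hecke bound $O(n^{(\ell-1)/4})$ on cusp form coefficients already loses to $n^{(\ell-3)/2}$ when $\ell\geq 7$, so Deligne is a convenience, not a necessity.) Second, you explicitly punt on composite $t$ in (3); in \cite{G-O} that case is handled via the Garvan--Kim--Stanton parametrization of $t$-cores by integer vectors and a positivity argument for the resulting quadratic form, not by a modular-forms decomposition, so as written your proposal leaves that part of the statement unproved.
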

\begin{proof}
Claim (1) is a straightforward observation. Claims (2), (4), and (5) are proved on p. 339-340 of \cite{G-O}. Claim (3) is Theorem~1 of \cite{G-O}, while
(6) is Theorem~4 of \cite{G-O}. 
\end{proof}

\section{Abaci and large $\ell$-core partitions}\label{Abacus}

Throughout this section, suppose that $\ell$ is prime.
The main result here is the following theorem which shows that every sufficiently large $\ell$-core partition has a part that is a multiple of $\ell.$

\begin{theorem}\label{EleanorTheorem}
Suppose that $\ell$ is prime, and let $N_{\ell}:=(\ell^6- 2\ell^5 + 2\ell^4 -3\ell^2 + 2\ell)/24 .$ If $n>N_{\ell},$ then every
$\ell$-core partition of size $n$ has a part that is a multiple of $\ell.$
\end{theorem}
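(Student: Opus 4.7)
The plan is to use the $\ell$-runner abacus model of $\ell$-core partitions. For a partition $\lambda$ with $s$ parts, place beads at the beta-numbers $\beta_j = \lambda_j + s - j$ on the $\ell$-abacus; then $\lambda$ is an $\ell$-core if and only if on each runner $i \in \{0, 1, \ldots, \ell-1\}$ the beads occupy an initial block $\{i, i + \ell, \ldots, i + (M_i - 1)\ell\}$ of length $M_i \geq 0$, and the constraint $\beta_s \geq 1$ forces $M_0 = 0$. A direct count gives the part of $\lambda$ associated with the bead at position $\ell k + i$ (the number of non-bead positions below it):
$$P(k, i) = \sum_{i' < i} \max(0,\, k + 1 - M_{i'}) + \sum_{i' > i} \max(0,\, k - M_{i'}).$$

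The key step is a linearity analysis of $P(k, i) \pmod{\ell}$. Sort the distinct values of $\{M_0, \ldots, M_{\ell-1}\}$ in decreasing order $V_1 > V_2 > \cdots > V_t = 0$, and set $N_j := \#\{i : M_i \geq V_j\}$. For any runner $i$ with $M_i = V_j$ where $1 \leq j < t$, and any $k \in [V_{j+1},\, V_j - 1]$, every runner $i'$ with $M_{i'} \geq V_j$ contributes $0$ to $P(k, i)$ (since $M_{i'} > k$), while every runner with $M_{i'} \leq V_{j+1}$ contributes linearly in $k$. Hence
$$P(k, i) \equiv (\ell - N_j)\, k + C_{i, j} \pmod{\ell}$$
for a constant $C_{i, j}$ independent of $k$. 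Since $\ell$ is prime and $1 \leq N_j \leq \ell - 1$ for $j < t$, the coefficient $\ell - N_j$ is a unit modulo $\ell$, and so $P(k, i) \pmod{\ell}$ attains every residue class as $k$ ranges over any $\ell$ consecutive integers. Consequently, whenever $V_j - V_{j+1} \geq \ell$ for some $j$, some bead at level $j$ corresponds to a part of $\lambda$ divisible by $\ell$.

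Contrapositively, if $\lambda$ has no part divisible by $\ell$, then $V_j - V_{j+1} \leq \ell - 1$ for every $j$. Telescoping against $V_t = 0$ yields the bound $M^{(r)} \leq (\ell - r)(\ell - 1)$ for the $r$th largest element of the multiset $\{M_0, \ldots, M_{\ell-1}\}$. Substituting into the abacus size formula
$$|\lambda| = \ell \sum_{i=0}^{\ell-1} \binom{M_i}{2} + \sum_{i=0}^{\ell-1} i\, M_i - \binom{s}{2}, \qquad s = \sum_i M_i,$$
and optimizing over the admissible assignments of values to runners---by the rearrangement inequality, the optimum pairs the largest $M$-values with the largest indices---yields the extremal configuration $M_i = i(\ell - 1)$; a direct calculation verifies that this configuration achieves $|\lambda| = N_\ell$ exactly. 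Hence $|\lambda| \leq N_\ell$ whenever $\lambda$ has no part divisible by $\ell$, which proves the theorem by contraposition. I expect the main obstacle to be the bookkeeping in the linearity step---particularly the careful handling of ties among the $M_i$ (absorbed into the distinct-values $V_j$ framework) and the bottom level $j = t$ (where the coefficient $\ell - N_t$ vanishes mod $\ell$, but is harmless since $V_t = 0$ means no beads are present there)---together with verifying that the finite optimization evaluates to precisely $N_\ell = (\ell^6 - 2\ell^5 + 2\ell^4 - 3\ell^2 + 2\ell)/24$.
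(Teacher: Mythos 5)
Your abacus setup, the part formula $P(k,i)$, and the linearity analysis are correct and reproduce the paper's key lemma: over $\ell$ consecutive rows spanning a gap in the column-height profile, the parts attached to the beads of a fixed runner form an arithmetic progression with common difference $\ell-N_j$ prime to $\ell$, hence include a positive multiple of $\ell$. Your telescoped bound $M^{(r)}\leq(\ell-r)(\ell-1)$ is exactly the paper's $b_i\leq i(\ell-1)$, so up to this point you are on the paper's route, with arguably cleaner handling of ties.

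The genuine gap is in the final optimization. The size $|\lambda|=\ell\sum_i\binom{M_i}{2}+\sum_i iM_i-\binom{s}{2}$ is \emph{not} monotone in the column heights: adding a bead at position $p$ changes $|\lambda|$ by $p-s$, which is negative once $s>p$. So neither the rearrangement inequality (which only optimizes the assignment of a \emph{fixed} multiset to runners) nor substituting the upper bounds $M^{(r)}\leq(\ell-r)(\ell-1)$ shows that the maximum over your feasible region is attained at $M_i=i(\ell-1)$. Concretely, for $\ell=3$ the configuration $(M_0,M_1,M_2)=(0,0,4)$ satisfies all of your telescoped bounds ($M^{(1)}=4\leq 4$, $M^{(2)}=0\leq 2$, $M^{(3)}=0$) yet gives the $3$-core $(8,6,4,2)$ of size $20>N_3=16$; it is excluded only by the untelescoped gap condition $V_1-V_2\leq \ell-1$, which you discarded before optimizing. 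As written your argument therefore yields a weaker bound than $N_\ell$, and the asserted extremality of $M_i=i(\ell-1)$ is false for the region you optimize over. To close the proof you must retain the full gap constraints and genuinely control the $-\binom{s}{2}$ term when identifying the maximizer. (Be aware that the paper itself elides the same difficulty: it asserts that $\mathfrak{A}_\lambda\leq\mathfrak{A}_{\lambda'}$ componentwise implies $|\lambda|\leq|\lambda'|$, which fails for $(0,0,2)$ versus $(0,1,2)$ when $\ell=3$, with sizes $6$ and $5$; so this step needs real care in any complete write-up.)
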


\begin{remark} 
We note that $N_{\ell}<\ell^6/24$ is not optimal. Indeed, if we let $N^{\max}_{\ell}$ be the largest $n$ admitting an $\ell$-regular $\ell$-core partition, then it turns out that $N_3^{\max}=10$ and  $N_3=16.$
\end{remark}

\subsection{Abaci Theory} We make use of the theory of {\it abaci} for partitions (for example, see
\cite{E-M, J-K}). In particular, let $\lambda = \lambda_1 \geq \lambda_2 \geq \cdots \geq \lambda_s > 0$ be a partition of $n$. For each $1 \leq i \leq s,$ define the \textit{$i$th structure number} $B_i := \lambda_i -i+s$, so
that $B_i = h_\lambda(i,1),$ the hook length of cell $(i,1)$.

Using these structure numbers, we represent the partition $\lambda$ as an $\ell$-abacus $\mathfrak{A}_{\lambda},$ consisting of beads placed on rods numbered $0, 1,\dots, \ell-1.$
For each $B_i$, there is a
unique pair of integers ($r_i,c_i)$ for which
$B_i = \ell(r_i - 1) + c_i$ and $0 \leq c_i \leq \ell-1$. The abacus $\mathfrak{A}_{\lambda}$ then consists of $s$ beads, where for each $i$, one places a bead in position $(r_i,c_i)$.

\begin{lemma}[Lemma 2.7.13, \cite{J-K}]
Assuming the notation above, $\lambda$ is an $\ell$-core if and only if all of the beads in $\mathfrak{A}_{\lambda}$ lie at the top of their respective rods without gaps.
\end{lemma}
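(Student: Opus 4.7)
My plan is to connect the $\ell$-core property of $\lambda$ to the abacus picture via the classical bijection between removing rim hooks from $\lambda$ and sliding beads up rods on $\mathfrak{A}_{\lambda}$. Concretely, I would prove the stated equivalence by establishing a chain of three mutually equivalent conditions: (A) $\lambda$ is an $\ell$-core; (B) no rim hook of length $\ell$ can be removed from $\lambda$; (C) every bead on $\mathfrak{A}_{\lambda}$ sits flush at the top of its rod, with no empty position anywhere above it on that rod.

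For (A)$\Leftrightarrow$(B), I would use the standard partition-theoretic fact that $\lambda$ has a hook of length $h$ if and only if one can strip a rim hook of length $h$ off the boundary of $\lambda$, together with the observation that any rim hook of length $k\ell$ may be peeled off as $k$ successive rim hooks of length $\ell$. Consequently, $\lambda$ has no hook length divisible by $\ell$ precisely when no $\ell$-rim hook can be removed.

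For (B)$\Leftrightarrow$(C), I would invoke the abacus dictionary (see, e.g., Section~2.7 of \cite{J-K}): removing a rim hook of length $h$ from $\lambda$ corresponds to choosing a structure number $B_i$ and replacing it by $B_i-h$, provided $B_i-h \geq 0$ and $B_i-h \notin \{B_1,\ldots,B_s\}$. Specializing to $h=\ell$, the substitution $B_i \mapsto B_i-\ell$ translates the bead from $(r_i,c_i)$ to $(r_i-1,c_i)$, so the move is legal exactly when $r_i \geq 2$ and the slot $(r_i-1,c_i)$ is vacant, that is, exactly when some bead has a gap immediately above it on its rod. Therefore (B) says no such bead exists, which is precisely (C).

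The main obstacle is verifying the rim-hook/bead-move dictionary used in (B)$\Leftrightarrow$(C) with care. This rests on the identification of the structure numbers $B_i=\lambda_i-i+s$ with the beta-numbers of $\lambda$ and on tracking how the beta-set transforms under a single rim-hook removal; while these are routine manipulations in the combinatorial theory of symmetric group characters, the bookkeeping is where any mistakes would creep in. A fully self-contained treatment can be found in Section~2.7 of \cite{J-K}, whose Lemma~2.7.13 is exactly the statement we are proving.
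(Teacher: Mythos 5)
The paper does not prove this lemma at all---it is imported verbatim as Lemma~2.7.13 of James--Kerber \cite{J-K}---so there is no internal argument to compare against. Your proof is the standard one from the abacus/beta-number theory (hook lengths divisible by $\ell$ correspond to a bead with a vacancy some multiple of $\ell$ below it on the same rod, and removing an $\ell$-rim hook is sliding a bead up one row into an empty slot), and it is correct; your intermediate claim that a $k\ell$-rim hook peels into $k$ successive $\ell$-rim hooks is true provided the segments are removed from the lower-left end of the strip first, though for the equivalence you only need the easier observation that a bead at $B$ with a gap at $B-k\ell$ forces, somewhere along the chain $B, B-\ell,\dots,B-k\ell$, an occupied position directly above a vacant one.
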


In view of this lemma, we may represent an abacus of an $\ell$-core partition by  $\ell$-tuples of non-negative integers, say $(b_0, \dots, b_{\ell-1}),$ where $b_i$ denotes the number of beads in column $i$. However, such representations are not unique as they generally allow for parts of size zero. We have the following elementary lemma.

\begin{lemma}[Lemma 1, \cite{O-S}]
The following abaci both represent the same $\ell$-core partition:
$$(b_0,b_1,\dots,b_{\ell-1}) \quad \text{and} \quad (b_{\ell-1} +1,b_0,b_1,\dots,b_{\ell-2}).$$
\end{lemma}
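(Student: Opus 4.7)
The plan is to verify the lemma by tracking how the abacus changes when we artificially pad the partition $\lambda$ with an extra zero part. Since $\lambda$ and $\lambda$ with a trailing $0$ represent the same partition, any two abaci that arise in this way must represent the same $\ell$-core, so the claim will reduce to computing the effect of this padding on the rod-count vector.

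First I would recall explicitly the passage from the structure numbers to the abacus: the $i$th bead sits in the unique position $(r_i,c_i)$ with $B_i=\ell(r_i-1)+c_i$ and $0\le c_i\le \ell-1$, so the rod index is $B_i \bmod \ell$. In the $\ell$-core case the set $\{B_1,\dots,B_s\}$ consists, on each rod $c$, of the values $c,\ c+\ell,\ c+2\ell,\dots,\ c+\ell(b_c-1)$, i.e.\ exactly $b_c$ beads stacked at the top of rod $c$.

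Next I would compute the effect of replacing $\lambda=(\lambda_1,\dots,\lambda_s)$ by $\widetilde\lambda=(\lambda_1,\dots,\lambda_s,0)$, which has $\widetilde s=s+1$ parts. The new structure numbers are $\widetilde B_i=\lambda_i-i+(s+1)=B_i+1$ for $1\le i\le s$, together with the brand new $\widetilde B_{s+1}=0$. Thus in the abacus picture every old bead shifts by $+1$ in its $B$-value and a single new bead appears at $B=0$. Translating back to rod coordinates via $B\mapsto (B\bmod \ell,\ \lfloor B/\ell\rfloor+1)$, a bead on rod $c$ with $c<\ell-1$ becomes a bead on rod $c+1$ in the same row, while a bead on rod $\ell-1$ in row $r$ becomes a bead on rod $0$ in row $r+1$. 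The freshly added bead at $B=0$ lands on rod $0$, row $1$.

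Assembling these contributions one rod at a time: for $1\le c\le \ell-1$, rod $c$ of $\widetilde{\mathfrak A}$ inherits the $b_{c-1}$ beads of old rod $c-1$, still packed at the top, so its new bead count is $b_{c-1}$. Rod $0$ of $\widetilde{\mathfrak A}$ receives the $b_{\ell-1}$ beads from old rod $\ell-1$ (now sitting in rows $2,\dots,b_{\ell-1}+1$) together with the new bead in row $1$; again these are flush at the top with no gap, giving a count of $b_{\ell-1}+1$. Hence the rod vector for $\widetilde{\mathfrak A}$ is exactly $(b_{\ell-1}+1,\,b_0,\,b_1,\,\dots,\,b_{\ell-2})$. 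Since $\lambda$ and $\widetilde\lambda$ represent the same partition, this proves the two rod vectors encode the same $\ell$-core.

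The only place where any care is required is the wraparound step: one must check that the new bead at the top of rod $0$ sits immediately above the shifted-down beads from old rod $\ell-1$ (so the ``no gaps'' property is preserved and we really do get a valid $\ell$-core abacus). Once that bookkeeping is done, the identification of rod vectors is automatic, so I do not anticipate a serious obstacle beyond this indexing check.
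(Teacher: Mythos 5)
Your proof is correct. The paper itself gives no proof of this lemma---it is quoted directly from Lemma 1 of Ono--Sze \cite{O-S}---and your argument (appending a part of size zero, which shifts every structure number $B_i$ up by $1$ and introduces a new bead at $B=0$, then tracking the resulting cyclic shift of rods with the wraparound from rod $\ell-1$ to rod $0$) is exactly the standard argument used there, with the bookkeeping on top-justification handled correctly.
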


By repeatedly applying this lemma, we may canonically define the unique abacus representation for an $\ell$-core to be the one with zero beads in the first column. Thus, when we talk about \textit{the} abacus representation of an $\ell$-core $\lambda$, we will always mean the abacus of the form $\mathfrak{A}_{\lambda} = (0, b_1, \cdots, b_{\ell-1}).$\footnote{These abaci correspond to those representations of $\lambda$ without parts of size 0.}  Using these abaci, we offer the following lemma that will allow us to rule out the existence of partitions that are simultaneously $\ell$-core and $\ell$-regular for all but finitely many $n$.

\begin{lemma} \label{lem: beadjumps}
Suppose that $\mathfrak{A}_{\lambda} = (0, b_1, \dots, b_{\ell-1})$ is the abacus corresponding to an $\ell$-core $\lambda,$ and suppose that there is an integer $k\geq 0$ such that for each $1\leq i\leq \ell-1$ we have either $b_i\leq k$ or $b_i\geq k+\ell.$
If there is at least one $j$ for which $b_j\geq k+\ell,$ then
$\lambda$ is not an $\ell$-regular partition. 
\end{lemma}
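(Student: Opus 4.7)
The strategy is to produce an explicit $\ell$-term arithmetic progression of parts of $\lambda$ whose common difference is coprime to $\ell$; this immediately forces one of those parts to be a multiple of $\ell$, and hence $\lambda$ to fail $\ell$-regularity. Put $M := \{c \in \{1, \ldots, \ell-1\} : b_c \geq k + \ell\}$ (the ``tall'' columns) and $m := |M|$. The canonical form enforces $b_0 = 0$, so $M \subseteq \{1, \ldots, \ell-1\}$, while the hypothesis supplies at least one tall column, giving $1 \leq m \leq \ell - 1$. Since $\ell$ is prime, $\gcd(m, \ell) = 1$ --- this is the key arithmetic input.

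Fix any $c \in M$ and consider the beads of column $c$ occupying rows $k+1, k+2, \ldots, k+\ell$, all of which are present because $b_c \geq k + \ell$. Their structure numbers are
\[
B^{(j)} := c + \ell(k+j-1), \qquad j = 1, 2, \ldots, \ell,
\]
and the corresponding parts of $\lambda$ are $\lambda^{(j)} = B^{(j)} - f(B^{(j)})$, where $f(B)$ denotes the number of structure numbers of $\lambda$ strictly less than $B$ (this is just the standard reindexing $\lambda_i = B_i + i - s$). The central claim I need is
\[
f(B^{(j+1)}) - f(B^{(j)}) = m \qquad \text{for } 1 \leq j \leq \ell - 1.
\]
Once this is in hand, combining it with $B^{(j+1)} - B^{(j)} = \ell$ yields $\lambda^{(j+1)} - \lambda^{(j)} = \ell - m$, so the residues $\lambda^{(1)}, \ldots, \lambda^{(\ell)} \pmod{\ell}$ are obtained from $\lambda^{(1)}$ by iterated translation by $-m$. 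Because $\gcd(m,\ell) = 1$, these $\ell$ residues exhaust $\Z/\ell\Z$, and in particular some $\lambda^{(j)}$ is a multiple of $\ell$.

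To verify the central claim I would count the structure numbers of $\lambda$ lying in the half-open interval $[B^{(j)}, B^{(j+1)}) = [B^{(j)}, B^{(j)}+\ell)$. Short columns (those with $b_{c'} \leq k$) contribute nothing: their largest structure number is at most $\ell k - 1 < B^{(j)}$. Column $c$ itself contributes only $B^{(j)}$. Each remaining tall column $c' \in M \setminus \{c\}$ contributes exactly one structure number in the window, namely the bead in row $k+j+1$ if $c' < c$ (value $c' + \ell(k+j)$) and the bead in row $k+j$ if $c' > c$ (value $c' + \ell(k+j-1)$); both rows are populated because $b_{c'} \geq k + \ell$. Summing gives $1 + (m-1) = m$, as required. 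The only real obstacle here is bookkeeping --- tracking which row of each tall column falls in the window, split by the sign of $c' - c$ --- after which the conclusion follows mechanically from the abacus description and the primality of $\ell$.
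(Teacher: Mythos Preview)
Your proof is correct and follows essentially the same route as the paper: both fix a tall column, look at its $\ell$ consecutive beads in rows $k+1,\dots,k+\ell$, and show that the corresponding parts form an arithmetic progression with common difference $\ell-m$ (the paper's $\delta$ is your $m$), which is coprime to $\ell$ and hence hits every residue class. Your explicit windowed count of structure numbers in $[B^{(j)},B^{(j)}+\ell)$ is exactly the justification the paper leaves implicit when it asserts $|i-i'|=\delta$ for consecutive beads.
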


\begin{remark}
	Let $\mathfrak{A}_{\lambda} = (0, b_1, \cdots, b_{\ell-1})$ be the abacus of an $\ell$-core $\lambda.$ If $\min(b_1, \cdots b_{\ell-1}) \geq \ell,$  then the proof of the lemma will show that $\lambda$ has a part of exact size
	 $\ell$. These are the cases where one can choose $k=0$ in the lemma.
\end{remark}

\begin{proof}
	By our hypothesis, we may fix $j$ for which $b_j \geq k + \ell$. Let $\delta$ denote the total number of columns with length at least $k+\ell$. Note that if $B_{i}$ and $B_{i'}$ are  structure numbers corresponding to consecutive beads in column $j$ between rows $k+1$ and $k+\ell$, then $|i-i'| = \delta$. Further, we have $|B_i - B_{i'}| = \ell$. 
	Generalizing the observation that $B_{i-1}-B_i = \lambda_{i-1}-\lambda_i + 1$, we have 
	\[|\lambda_{i} - \lambda_{i'}| = |B_{i} - B_{i'}| -\delta = \ell - \delta.
	\]
	
In particular, the difference between parts corresponding to consecutive beads in column $j$ between rows $k+1$ and $k+\ell$ is fixed and coprime to $\ell.$
As a consequence, these parts form a modulus $\ell-\delta$ arithmetic progression consisting of $\ell$ values. Thus, the parts cover all residue classes modulo $\ell$, and so includes a part that is a multiple of $\ell$.
\end{proof}

\begin{example} Let $\ell=3$, and consider the $3$-core abacus $(0, 4,1)$ as shown below.
\smallskip
$$\begin{tabular}{c|ccc}
1& $\cdot$ & $\circ$ & $\circ$ \\
2& $\cdot$ & $\circ$ & $\cdot$ \\
3& $\cdot$ & $\circ$ & $\cdot$ \\
4& $\cdot$ & $\circ$ & $\cdot$ \\
\end{tabular}$$
\smallskip

We illustrate Lemma \ref{lem: beadjumps} with $k=1.$ Since $b_1=3+1=4$ and $b_2 =1$, the lemma asserts that $\lambda$ has a part that is a multiple of 3. The structure numbers are found to be $B_1 = 10$, $B_2 = 7$, $B_3=4$, $B_4=2$, and $B_1=1$, and we compute that $\lambda_1 = 10+1-5 = 6$, $\lambda_2 = 4$, $\lambda_3 = 2$, and $\lambda_4 = \lambda_5 =1$. In particular, $\lambda_1$ is a multiple of 3. 

Finally, consider the abacus with the bead in row 4 removed.  One easily checks that the corresponding partition is $3$-regular, demonstrating that the condition on the size of the gap in column lengths cannot be relaxed. 
\end{example}

\smallskip
With  two more observations, we will be able to construct an abacus which gives an upper bound for the size of an $\ell$-regular $\ell$-core partition. 

First, suppose that $\lambda$ and  $\lambda'$ are $\ell$-cores with $\mathfrak{A}_\lambda =(0, b_1, \dots , b_{\ell-1})$ and $\mathfrak{A}_{\lambda'} =(0, b_1', \dots , b_{\ell-1}')$. Then we say $\mathfrak{A}_\lambda \leq \mathfrak{A}_{\lambda'}$ if $b_i \leq b_i'$ for all $1 \leq i \leq \ell-1$. This relation endows the set of $\ell$-core abaci with the structure of a directed partially ordered set. It is not hard to show that $\mathfrak{A}_\lambda \leq \mathfrak{A}_{\lambda'}$ implies $|\lambda| \leq |\lambda'|$.

For the purpose of obtaining $N_{\ell},$ the following lemma allows us to restrict our attention to those abaci where the $b_i$ are weakly increasing.

\begin{lemma} \label{lem: permute}
Suppose that $\lambda=(\lambda_1,\dots,\lambda_s)$ is an $\ell$-core partition of $n$ with abacus $\mathfrak{A}_{\lambda}=(0, b_1, \dots, b_{\ell-1}).$ If  there exist $1 \leq i < j \leq \ell-1$ for which $b_j < b_i,$ then the abacus $\mathfrak{A}'$ obtained by swapping $b_i$ and $b_j$ represents an $\ell$-core partition $\lambda'$ with $\lambda' \vdash n' >n$.
\end{lemma}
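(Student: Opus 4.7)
The plan is to derive a closed formula for $|\lambda|$ in terms of the abacus data $(0,b_1,\dots,b_{\ell-1})$ and observe that a swap $b_i\leftrightarrow b_j$ affects only a single linear term, which strictly increases under the stated hypotheses.

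First, I would set up the size formula. In the canonical form with $b_0=0$, the total number of parts is $s:=\sum_{c=1}^{\ell-1} b_c$, and the structure numbers $B_k=\lambda_k-k+s$ occupying column $c$ are precisely $c,\,\ell+c,\,2\ell+c,\dots,(b_c-1)\ell+c$, since beads pack at the top of their rod without gaps. Summing the arithmetic progression in each column and using the general identity $|\lambda|=\sum_{k=1}^s B_k-\binom{s}{2}$ (which follows from $\lambda_k=B_k-s+k$), I obtain
$$
|\lambda|\;=\;\ell\sum_{c=1}^{\ell-1}\binom{b_c}{2}\;+\;\sum_{c=1}^{\ell-1} c\,b_c\;-\;\binom{s}{2}.
$$

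Second, I would note that swapping $b_i$ and $b_j$ preserves the sum $s=\sum_c b_c$ (so leaves $\binom{s}{2}$ untouched) and preserves the multiset $\{b_1,\dots,b_{\ell-1}\}$ (so leaves $\sum_c\binom{b_c}{2}$ untouched). Only the ``linear moment'' $\sum_c c\,b_c$ changes, and the change is
$$
(i\,b_j+j\,b_i)-(i\,b_i+j\,b_j)\;=\;(j-i)(b_i-b_j),
$$
which is strictly positive by the hypotheses $i<j$ and $b_j<b_i$. Hence $n'=|\lambda'|=n+(j-i)(b_i-b_j)>n$, as required.

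Finally, I would verify that $\mathfrak{A}'$ is still a legitimate abacus of an $\ell$-core partition in canonical form: the zeroth column is still empty, and every column still has its beads packed at the top without gaps, so Lemma 2.7.13 of \cite{J-K} applies and produces an $\ell$-core $\lambda'$. None of these steps is subtle; the only routine point requiring care is the derivation of the closed expression for $|\lambda|$, and that reduces to summing an arithmetic progression in each rod.
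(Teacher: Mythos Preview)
Your proof is correct and follows essentially the same approach as the paper: both express $|\lambda|$ via the column-wise sums of structure numbers, observe that the total bead count $s$ is preserved under the swap, and then compare the remaining terms. Your version is slightly more explicit in that you derive the closed formula $|\lambda|=\ell\sum_c\binom{b_c}{2}+\sum_c c\,b_c-\binom{s}{2}$ and compute the exact increment $(j-i)(b_i-b_j)$, whereas the paper establishes only the strict inequality via a direct comparison of the column sums.
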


\begin{proof}
	We may write
	\begin{align*}
	n = \sum_{k=1}^s \lambda_k
	 = \sum_{k=1}^s (B_k + k -s)
	 = 	\sum_{k=1}^s B_k + \sum_{k=1}^s (k-s),
	\end{align*}
and likewise
$n' = \sum_{k=1}^s B'_k + \sum_{k=1}^s (k-s)$,
where $s$ remains the same because we have not changed the total number of beads. Since the second sum is the same in both expressions, it suffices to prove that 
$\sum_{i=1}^s B_k < \sum_{i=1}^s B'_k.$
Computing column-wise, we have 
\begin{align*}
	\sum_{k=1}^s B_k &= \sum_{m=1}^{b_i}\left(3(m-1)+i\right) + \sum_{m=1}^{b_j}\left(3(m-1)+j\right) + \underset{k \neq i,j}{\sum_{k=1}^{\ell-1}}\sum_{m=1}^{b_k} \left(3(m-1)+k\right) \\
	 &< \sum_{m=1}^{b_i}\left(3(m-1)+j\right) + \sum_{m=1}^{b_j}\left(3(m-1)+i\right) + \underset{k \neq i,j}{\sum_{k=1}^{\ell-1}}\sum_{m=1}^{b_k} \left(3(m-1)+k\right) \\
	 &=\sum_{k=1}^s B_k'
\end{align*}
as desired, where the inequality holds since $i<j$ and $b_i > b_j$. 
\end{proof}

\subsection{Proof of Theorem~\ref{EleanorTheorem}}

We aim to find an upper bound on $n$ such that $\lambda \vdash n$ can be an $\ell$-regular $\ell$-core partition. To do this, we will construct a partition $\Lambda$ such that $\lambda$ being an $\ell$-regular $\ell$-core implies $|\lambda| \leq |\Lambda|$. 
By Lemma \ref{lem: permute}, it suffices to restrict our attention to those $\ell$-cores whose abaci have weakly increasing column lengths. Suppose $\lambda$ is a weakly increasing $\ell$-regular $\ell$-core with abacus $\mathfrak{A}_\lambda = (0, b_1, \dots, b_{\ell-1})$. By Lemma \ref{lem: beadjumps}, we must have $\min(b_1, \dots, b_{\ell-1}) = b_1 \leq \ell-1$. By the same logic, we must have $b_i \leq i(\ell-1)$ for all $1\leq i \leq \ell-1$. 

Then if $\Lambda$ is the partition with abacus $\mathfrak{A}_{\Lambda} = (0, \ell-1, 2(\ell-1), \dots, (\ell-1)^2)$, we immediately have $\mathfrak{A}_\lambda \leq \mathfrak{A}_\Lambda$, which implies $|\lambda| \leq |\Lambda|$. Then $N_\ell := |\Lambda|$ gives an upper bound on $n$. By direct calculation, we find that
\[|\Lambda| = \sum_{i=1}^s (B_i +i-s)= \sum_{i=1}^{\ell-1} \sum_{j=1}^{i(\ell-1)} \left( \ell(j-1)+i \right) + \sum_{i=1}^{s} (i-s) = \frac{\ell^6- 2\ell^5 + 2\ell^4 -3\ell^2 + 2\ell}{24},
\]
where $s = \ell(\ell-1)^2/2$, giving the desired conclusion.

\section{Proofs of our results}\label{Proofs}
We are now in a position to  prove Theorems~\ref{Theorem1} and \ref{Theorem2}, and Corollaries~\ref{Corollary1} and ~\ref{Corollary2}.

\begin{proof}[Proof of Theorem~\ref{Theorem1}]
We note that Theorems~\ref{HR} and \ref{Hagis} imply that
$$
\lim_{n\rightarrow +\infty}\frac{p(n)-p_{\ell}(n)}{p(n)}=1.
$$
The claim now follows by combining Lemma~\ref{Prop1} (2), Theorem~\ref{HR} , and Theorem~\ref{GranvilleOno} (4-6).
\end{proof}

\begin{proof}[Proof of Corollary~\ref{Corollary1}]
This claim follows from Theorem~\ref{Theorem1} by choosing primes $\ell \rightarrow +\infty.$
\end{proof}

\begin{proof}[Proof of Theorem~\ref{Theorem2}]
By Theorem~\ref{EleanorTheorem}, every $\ell$-core partition of size $n>N_{\ell}$ has a part that is a multiple of  $\ell.$  Since every hook length of an $\ell$-core is not a multiple of $\ell$,  it follows from Murnaghan-Nakayama that
whenever $\lambda, \mu \vdash n$ are $\ell$-cores with $n>N_{\ell},$
we have $\chi_{\lambda}(\mu)=0.$
\end{proof}

\begin{proof}[Proof of Corollary~\ref{Corollary2}]
Thanks to Theorem~\ref{Theorem2}, we have that $Z^*_{\ell}(n) =c_{\ell}(n)^2$ for sufficiently large $n$.
The claimed asymptotics and inequalities follow from Theorem~\ref{GranvilleOno} (4-6).
\end{proof}

\end{document}